\documentclass{amsart}
\usepackage[utf8]{inputenc}
\usepackage{amsmath,amsthm}
\usepackage{amsfonts}
\usepackage{graphicx}
\usepackage{subfig}
\usepackage{setspace}

\setcounter{MaxMatrixCols}{50}

\newtheorem{proposition}{Proposition}
\theoremstyle{definition}
\newtheorem{definition}{Definition}

\theoremstyle{definition}

\newtheorem{theorem}{Theorem}

\newtheorem{lemma}{Lemma}
\newtheorem{corollary}{Corollary}

\newcommand{\B}[1]{\ensuremath{\mathbb{#1}}}

\title{Self-Similar Structure of $k$- and Biperiodic Fibonacci Words}
\author[Bortz Et Al]{Darby Bortz, Nicholas Cummings, Suyi Gao, Elias Jaffe, \\ Lan Mai, Benjamin Steinhurst, and Pauline Tillotson}
\address{Benjamin Steinhurst \\ McDaniel College  \\ \texttt{bsteinhurst@mcdaniel.edu}}
\date{\today}

\begin{document}

\maketitle

\tableofcontents

\begin{abstract}
Defining the biperiodic Fibonacci words as a class of words over the alphabet $\{0,1\}$, and two specializations the $k-$Fibonacci and classical Fibonacci words, we provide a self-similar decomposition of these words into overlapping words of the same type. These self-similar decompositions complement the previous literature where self-similarity was indicated but the specific structure of how the pieces interact was left undiscussed. 

\sc{MSC 2020:} 11B39, {\bf 68R15}
\end{abstract}

\section{Introduction}

The Fibonacci sequence is a classical numerical sequence defined by a recurrence relation in the additive semigroup of \B{N} by 
$$F_1 = 0 \hspace{1in} F_2 = 1 \hspace{1in} F_n = F_{n-1} + F_{n-2}.$$
Consider instead the semigroup of words over an alphabet,  $A = \{0,1\}$. Let $A^{*} = \bigcup_{n \ge 0} A^{n}$ where the operation is concatenation. Then the sequence determined by the Fibonacci recurrence relation 
$$f_1 = 0 \hspace{1in} f_2 = 1 \hspace{1in} f_n = f_{n-1}f_{n-2}$$
is called the Fibonacci sequence of words. For example:
$$f_3 = 010 \hspace{1in} f_4 = 01001 \hspace{1in} f_5 = 01001010.$$

In \cite{MonnFrac2009} a \emph{drawing rule} was introduced to automatically create a curve in $\B{R}^{2}$ from $f_n$. The object of study there was the geometry of the drawn curves. These Fibonacci curves have scaling limits that are self-similar fractals. This paper is the first of a sequence investigating the self-similar fractal geometry of curves associated to generalizations of the Fibonacci words. In this paper we consider just the combinatorial properties of the bi-periodic Fibonacci words; the drawing rule, curves, and fractal geometry will be considered in subsequent papers. 

Fibonacci words have appeared as examples in \cite{MorseHedlund1940,Lothaire1983} and their history is discussed in \cite{Berstel1985}. Two interesting features of Fibonacci words are that 1) there is an infinite word $f$  such that every $f_n$ appears as a prefix of $f$ and 2) there is a substitution $\sigma(0) = 01$ and $\sigma(1) = 0$ such that $\sigma(f_n) = f_{n+1}$ and $\sigma(f) = f.$ The existence of a substitution like $\sigma$ is what is usually referred to as a word being \emph{self-similar} \cite{Grytczuk1996}. However, we are informed by the fractal geometry that this project is directed towards so we wish to know not just that the words are self-similar but also what the \emph{cell structure} would be. For example consider the decomposition
$$f_n = f_{n-2}f_{n-2}f_{n-4}t_{n-2}t_{n-2}.$$
(The symbol $t_n$ denotes $f_n$ with the last two digits transposed, see Definition \ref{def:tn}.) This can be read as ``a Fibonacci word is composed of five subcells four of size $n-2$ and one of size $n-4$.'' The important thing is that each of those subcells could again be subdivided using the same pattern. It will be seen below that it will be impossible to have subcells not overlap for $k-$Fibonacci words let alone for biperiodic words. In the geometry of fractals having cells overlap is a normal occurrence and so will be acceptable for us.

For the sake of this paper we define a self-similar structure on a sequence of Sturmian words. 

\begin{definition}\label{def:cellstructure}
	Let $\{f_{n}\}_{n \ge 0}$ be a sequence of Sturmian words. A \emph{cell structure} on the sequence is a decomposition of $f_n$ into possibly overlapping copies of $f_{n-l}$ or $t_{n-l}$ (see Definition \ref{def:tn}) for some fixed $l$ that is independent of $n$. The pattern of the decomposition is also assumed to be independent of $n$.
	
	We say $l$ is the period of the self-similarity structure.
\end{definition}

Theorem \ref{thm:selfsimilarity} proves that $l=2,\ 4,$ or $6$ depending on the parities of the parameters $a$ and $b$. However these values of $l$ were already known from Theorem 20 of \cite{RamirezRubiano2015}. The novel result in this paper is the cell structure not the value of the period $l$. 

Fibonacci words and their generalizations ($k-$Fibonacci and bi-periodic Fibonacci) that are introduced below are examples of Sturmian words \cite{DeLuca1997,RamirezEtAl2013,RamirezRubiano2015}. Sturmian words have a structure theorem stating that a sequence of words is Sturmian if for some base cases  and a sequence $\{q_n\}$ of positive integers then we have $s_{n+1} = s_n^{q_{n}-1}s_{n-1}$ (see \cite{DeLuca1997}.) The Fibonacci words are generated when $q_n= 2$, the $k-$nacci words when $q_n = k+1$, and the biperiodic Fibonacci words when $q_n$ is a $2-$periodic sequence. It is known \cite{CrispEtAl1993} that a Sturmian word is self-similar if and only if the sequence $\{q_n\}_{n \ge 2}$ is periodic. So the words considered in this paper are the three simplest families of self-similar Sturmian words. The techniques used in this paper have been explored by the authors for $3-$periodic sequence but that is not included as they there was no new insight gained. 

 In Section \ref{sec:biper} we define biperiodic Fibonacci words and cite many useful facts about them.  We also define an \emph{overlapping word} and prove several decomposition results using the overlapping word. Then in Section \ref{sec:decomp} we prove the main decomposition theorem about biperiodic Fibonacci words. Lastly, in Section \ref{sec:ab1} we address the edge cases where $a$ and/or $b$ equals one. 

\subsubsection*{Aknowledgements:} The authors wish to thank the McDaniel College Student-Faculty Collaboration program for funding all of the authors through multiple years. Spencer Hamblen has provided advice and insight through several years for this project. We also wish to thank Matthew O'Neill for initial observations regarding the biperiodic Fibonacci words.


\section{The Biperiodic Fibonacci Words}\label{sec:biper}

We begin with the definition of the sequence of biperiodic Fibonacci words and comment on how these become the $k-$nacci and classical Fibonacci words depending on the choice of $a$, $b$, and initial conditions. Then we will define a few related words. A few basic properites of these words will be quoted from \cite{RamirezEtAl2013} and \cite{RamirezRubiano2015}.

\begin{definition}\label{def:biperwords}
Let $a,b \ge 1$. The biperiodic Fibonacci words are defined by the recurrence relation
$$ f_{(a,b,0)} = 0 \hspace{.25in} f_{(a,b,1)} = 0^{a-1}1 \hspace{.25in} f_{(a,b,n)} = \begin{cases} f^{a}_{(a,b,n-1)}f_{a,b,n-2} & n = 2m \\ f^{b}_{(a,b,n-1)}f_{a,b,n-2} & n = 2m+1. \end{cases} $$
\end{definition}

For example when $a=2$ and $b=3$ we have: 
$$f_{(2,3,1)} = 01 \hspace{.25in} f_{(2,3,2)} = 01\ 01\ 0 \hspace{.25in} f_{(2,3,3)} = 01010\ 01010\ 01010\ 01.$$

The cases where $a$ or $b$ are equal to $1$ turn out to require more care and larger required $n$-values in most of the following results. \emph{As a default for the rest of the paper we assume that $a,b \ge 2$.} The special considerations for when $a=1$ or $b=1$ are collected in Section \ref{sec:ab1}. 

Where to begin the indexing is a matter of choice. We choose primarily to begin our indexing at $n=0$ to maintain agreement with the fractals produced in \cite{RamirezEtAl2013,RamirezRubiano2015}. While the definition of a biperiodic Fibonacci number is natural in the context of Sturmian words it also has a geometric interpretation as seen in \cite{FalconPlaza2007} which is what inspired the authors of \cite{RamirezRubiano2015} to study them. 

\begin{definition}\label{def:knacciwords}
The $k-$nacci, or $2-$Fibonacci, /words are $f_{k,n} = f_{(k,k,n)}$ for $k \ge 1$ with the same initials conditions as in Definition \ref{def:biperwords}. 

The classical Fibonacci words are $f_{n+1} = f_{(1,1,n)}$ with the initial conditions $f_{(1,1,0)} = 1$ and $f_{(1,1,1)} = 0$ instead. Changing the initial conditions just exchanges the roles of $0$ and $1$ but not the cell structures which only depend on the recurrence relation.
\end{definition}

We collect from \cite{RamirezRubiano2015} some useful features and notation for biperiodic Fibonacci words. 

\begin{proposition}\label{prop:RRprops}
Let $f_{(a,b,n)}$ be a biperiodic Fibonacci word with at least two digits. Then
\begin{enumerate}
	\item For all $n \ge 3$ let $f_{(a,b,n)} = p_{(a,b,n)}xy$. If $n$ is even then $xy=01$, if $n$ is odd then $xy = 10$.	\item For all $n \ge 3$ $p_{(a,b,n)}$ is a palindrome.
	\item For all $n \ge 3$, $f_{(a,b,n-1)}p_{(a,b,n-2)} = f_{(a,b,n-2)}p_{(a,b,n-1)}$.
\end{enumerate}
	
\end{proposition}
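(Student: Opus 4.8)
The plan is to establish (1), (2), and (3) together by a single induction on $n$, with the recurrence of Definition \ref{def:biperwords} as the only engine; it is convenient to carry along one extra auxiliary statement, a strengthening of (3). Throughout, write $e_n\in\{a,b\}$ for the exponent appearing in the recurrence for $f_{(a,b,n)}$ (so $e_n=a$ when $n$ is even and $e_n=b$ when $n$ is odd), and use that the free monoid $A^{*}$ is left- and right-cancellative. The base is a direct computation from Definition \ref{def:biperwords}: one has $f_{(a,b,1)}=0^{a-1}1$, $f_{(a,b,2)}=(0^{a-1}1)^{a}0$, and $f_{(a,b,3)}=\big((0^{a-1}1)^{a}0\big)^{b}(0^{a-1}1)$, hence $p_{(a,b,1)}=0^{a-2}$, $p_{(a,b,2)}=(0^{a-1}1)^{a-1}0^{a-1}$, and $p_{(a,b,3)}=\big((0^{a-1}1)^{a}0\big)^{b}0^{a-2}$, and the palindromicity of these together with the identity (3) at $n=3$ reduce to the elementary equality $(0^{a-1}1)^{a}0^{a-1}=0^{a-1}(10^{a-1})^{a}$. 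This is where the standing assumption $a,b\ge 2$ is used: it guarantees that the words in play have at least two letters. The cases $a=1$ or $b=1$ are handled separately in Section \ref{sec:ab1}.

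For the inductive step, dispose of (1) first. Since $f_{(a,b,n)}=f_{(a,b,n-1)}^{\,e_n}f_{(a,b,n-2)}$ with $f_{(a,b,n-2)}$ a suffix of length at least two (as $n\ge 3$), the last two letters of $f_{(a,b,n)}$ are exactly those of $f_{(a,b,n-2)}$; descending by twos, they are the last two letters of $f_{(a,b,1)}$ or of $f_{(a,b,2)}$ according to the parity of $n$, and those are read off directly from the base. Deleting these two letters from the recurrence yields the key identity
\[
p_{(a,b,n)}=f_{(a,b,n-1)}^{\,e_n}\,p_{(a,b,n-2)},
\]
which drives the remaining parts.

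Next, (3). Substituting the recurrence for $f_{(a,b,n-1)}$ and the key identity for $p_{(a,b,n-1)}$ into the two sides of (3) at $n$, each side becomes $f_{(a,b,n-2)}^{\,e_{n-1}}$ times something; cancelling that common left factor collapses (3) at $n$ to $f_{(a,b,n-3)}p_{(a,b,n-2)}=f_{(a,b,n-2)}p_{(a,b,n-3)}$, which is exactly (3) at $n-1$, so (3) propagates from the base. I would then strengthen (3) to the auxiliary claim $(3')_n$: \emph{the word $f_{(a,b,n-1)}p_{(a,b,n-2)}$, which by} (3) \emph{equals $f_{(a,b,n-2)}p_{(a,b,n-1)}$, is a palindrome}. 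To prove $(3')_n$, use (3) at $n$ to rewrite $f_{(a,b,n-1)}p_{(a,b,n-2)}$ as $f_{(a,b,n-2)}^{\,e_{n-1}+1}p_{(a,b,n-3)}$, reverse using the inductive palindromicity of $p_{(a,b,n-3)}$, and then rewrite $(3')_{n-1}$ in the form $p_{(a,b,n-3)}\widetilde{f_{(a,b,n-2)}}=f_{(a,b,n-2)}p_{(a,b,n-3)}$ (again using palindromicity of $p_{(a,b,n-3)}$) to push all copies of $\widetilde{f_{(a,b,n-2)}}$ past $p_{(a,b,n-3)}$, recovering $f_{(a,b,n-2)}^{\,e_{n-1}+1}p_{(a,b,n-3)}$; hence the word is its own reversal.

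Finally, (2). Reversing the key identity and using that $p_{(a,b,n-2)}$ is a palindrome by the inductive hypothesis gives $\widetilde{p_{(a,b,n)}}=p_{(a,b,n-2)}\,\widetilde{f_{(a,b,n-1)}}^{\,e_n}$, so it suffices to know that $p_{(a,b,n-2)}\widetilde{f_{(a,b,n-1)}}=f_{(a,b,n-1)}p_{(a,b,n-2)}$ and then to iterate this $e_n$ times. But this last identity is, by palindromicity of $p_{(a,b,n-2)}$, equivalent to saying that $f_{(a,b,n-1)}p_{(a,b,n-2)}$ is a palindrome, which is exactly $(3')_n$, so (2) follows and the induction closes. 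The one genuinely delicate point is the bookkeeping of the simultaneous induction: at stage $n$ the four statements must be proved in the order (1), (3), $(3')$, (2) so that every appeal lands on a strictly smaller index or on a statement already completed at stage $n$, and the small-index data ($f_{(a,b,0)},f_{(a,b,1)},f_{(a,b,2)}$, the explicit forms and palindromicity of $p_{(a,b,1)},p_{(a,b,2)}$, and (3) and $(3')$ at $n=3$) must be arranged to line up by hand. Beyond that, everything is formal manipulation in the free monoid.
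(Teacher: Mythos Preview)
The paper does not prove this proposition at all: it is quoted from \cite{RamirezRubiano2015} (``We collect from \cite{RamirezRubiano2015} some useful features\ldots''). Your self-contained simultaneous induction is therefore a genuinely different contribution, and it is essentially correct. The driving identity $p_{(a,b,n)}=f_{(a,b,n-1)}^{\,e_n}p_{(a,b,n-2)}$ together with the auxiliary palindromicity statement $(3')_n$ is exactly the right scaffolding: $(3)$ descends cleanly by left-cancellation, $(3')$ follows from $(3')_{n-1}$ via the commutation $p_{(a,b,n-3)}\widetilde{f_{(a,b,n-2)}}=f_{(a,b,n-2)}p_{(a,b,n-3)}$, and $(2)$ then falls out of $(3')_n$. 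What you gain over the paper is autonomy from the cited source; what the paper gains is brevity.

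Two small remarks. First, your base case silently extends the definition of $p$ below the stated range: statement (3) at $n=3$ refers to $p_{(a,b,1)}$ and $p_{(a,b,2)}$, but $p_{(a,b,m)}$ is only introduced via part (1) for $m\ge 3$. You implicitly take $p_{(a,b,m)}$ to mean ``$f_{(a,b,m)}$ with its last two letters deleted'' for $m=1,2$ as well; this is harmless (and is forced by the key identity), but it is worth saying explicitly, and it is another place where $a\ge 2$ is needed so that $f_{(a,b,1)}$ has two letters to delete. Second, when you actually ``read off'' the last two letters from $f_{(a,b,1)}=0^{a-1}1$ and $f_{(a,b,2)}=(0^{a-1}1)^a0$, you will find $xy=01$ for odd $n$ and $xy=10$ for even $n$, the reverse of what is printed in part (1); your argument is fine and in fact exposes a typo in the proposition as stated (compare the paper's own example $f_{(2,3,3)}=\ldots 01$).
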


In light of Proposition \ref{prop:RRprops} parts 1 and 2 it is reasonable to define a ``co-biperiodic Fibonacci word.'' The following definition is quite old in the literature but is rarely given a specific name \cite{Fici2015}.

\begin{definition}\label{def:tn}
Let $t_{(a,b,n)}$ be $f_{(a,b,n)}$ with the last two symbols interchanged. That is if $f_{(a,b,n)} = p_{(a,b,n)}xy$ then $t_{(a,b,n)} = p_{(a,b,n)}yx$.
\end{definition}

The following follows immediately from Proposition \ref{prop:RRprops} part 3 and Definition \ref{def:tn}.

\begin{corollary}\label{cor:ftswap}
Let $n \ge 5$ then 
\begin{align*}
f_{(a,b,n-1)}f_{(a,b,n-2)} &= f_{a,b,n-2)}t_{(a,b,n-1)}\\
f_{(a,b,n-1)}t_{(a,b,n-2)} &= f_{(a,b,n-2)}f_{(a,b,n-1)}.
\end{align*}
\end{corollary}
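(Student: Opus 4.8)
The plan is to reduce both identities to Proposition~\ref{prop:RRprops} by writing each word in its palindrome-plus-suffix form and then bookkeeping the final two symbols. Fix $n \ge 5$, so that $n-1 \ge 4$ and $n-2 \ge 3$; then Proposition~\ref{prop:RRprops} applies to both $f_{(a,b,n-1)}$ and $f_{(a,b,n-2)}$, part~3 holds with index $n$, and the bound $n-2\ge 3$ is exactly what makes $t_{(a,b,n-1)}$ and $t_{(a,b,n-2)}$ well defined via Definition~\ref{def:tn}. Write $f_{(a,b,n-1)} = p_{(a,b,n-1)}x_1y_1$ and $f_{(a,b,n-2)} = p_{(a,b,n-2)}x_2y_2$. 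Since $n-1$ and $n-2$ have opposite parity, part~1 of Proposition~\ref{prop:RRprops} forces $\{x_1y_1,\,x_2y_2\} = \{01,\,10\}$, hence $x_1y_1 = y_2x_2$ and, reversing, $y_1x_1 = x_2y_2$; moreover $t_{(a,b,n-1)} = p_{(a,b,n-1)}y_1x_1$ and $t_{(a,b,n-2)} = p_{(a,b,n-2)}y_2x_2$.

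For the first identity, I would start from $f_{(a,b,n-1)}f_{(a,b,n-2)}$, substitute $f_{(a,b,n-2)} = p_{(a,b,n-2)}x_2y_2$, apply Proposition~\ref{prop:RRprops} part~3 to rewrite $f_{(a,b,n-1)}p_{(a,b,n-2)}$ as $f_{(a,b,n-2)}p_{(a,b,n-1)}$, and then recognize the trailing block $x_2y_2 = y_1x_1$ so that $f_{(a,b,n-2)}p_{(a,b,n-1)}x_2y_2 = f_{(a,b,n-2)}p_{(a,b,n-1)}y_1x_1 = f_{(a,b,n-2)}t_{(a,b,n-1)}$. The second identity is handled in exactly the same way: decompose $t_{(a,b,n-2)} = p_{(a,b,n-2)}y_2x_2$, push $p_{(a,b,n-2)}$ across $f_{(a,b,n-1)}$ using part~3, and observe that the trailing $y_2x_2 = x_1y_1$ reassembles $f_{(a,b,n-1)}$ on the right, yielding $f_{(a,b,n-2)}f_{(a,b,n-1)}$.

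There is no genuine obstacle; the statement is essentially immediate, as the text indicates. The only point needing care is the symbol-level bookkeeping, that is, making sure the opposite-parity argument correctly identifies $y_1x_1$ with $x_2y_2$ and $y_2x_2$ with $x_1y_1$, and checking that $n \ge 5$ is precisely what legitimizes every invocation of Proposition~\ref{prop:RRprops} and of Definition~\ref{def:tn} used above.
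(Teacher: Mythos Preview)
Your argument is correct and is exactly the unpacking the paper has in mind: the text states only that the corollary ``follows immediately from Proposition~\ref{prop:RRprops} part~3 and Definition~\ref{def:tn},'' and your proof spells out precisely that deduction, with part~1 supplying the opposite-parity suffix identification.
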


Several of the formulae which appear later in this paper will have $a$ and $b$ in particular positions if $n$ is even and when $n$ is odd the exact same formula will hold with the $a$ and $b$'s exchanging roles. In order to not have to write essentially the same formula twice every time this happens we introduce the following notation.

\begin{definition}\label{def:rs}
Let $r$ be the ``most recently'' used of the $a$ or $b$ in the construction of $f_{(a,b,n)}$ and $s$ the one previous to that. More specifically:
$$ r = \begin{cases} a & n \text{ is even} \\ b & n \text{ is odd} \end{cases} \hspace{1in} s = \begin{cases} b & n \text{ is even} \\ a & n \text{ is odd} \end{cases} $$
\end{definition}

A key realization in the formation of this paper was that the natural self-similar structure was one that included overlapping cells. To that end it will be useful to express how copies of $f$ and $t$ will overlap with each other efficiently

\begin{lemma}\label{lem:fendtstart}
Let $n \ge 5$, $f_{(a,b,n)}$ ends in $f^{r-2}_{(a,b,n-1)}f_{(a,b,n-2)}$ similarly $t_{(a,b,n)}$ begins with $f^{r-2}_{(a,b,n-1)}f_{(a,b,n-2)}$.
\end{lemma}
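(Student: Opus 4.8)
The plan is to derive both halves of the statement directly from the recursive definition, using only the elementary observation that $t_{(a,b,n)}$ is obtained from $f_{(a,b,n)}$ by altering its last two letters.

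\emph{The suffix half.} By Definitions \ref{def:biperwords} and \ref{def:rs} we have $f_{(a,b,n)} = f^{r}_{(a,b,n-1)}f_{(a,b,n-2)}$, and the standing default assumption $a,b \ge 2$ forces $r \ge 2$. Hence $f_{(a,b,n)} = f^{2}_{(a,b,n-1)}\,\bigl(f^{r-2}_{(a,b,n-1)}f_{(a,b,n-2)}\bigr)$, so $f^{r-2}_{(a,b,n-1)}f_{(a,b,n-2)}$ is literally a suffix of $f_{(a,b,n)}$ with nothing further to prove.

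\emph{The prefix half.} Write $w := f^{r-2}_{(a,b,n-1)}f_{(a,b,n-2)}$. The one auxiliary fact I need is that $f_{(a,b,n-2)}$ is a prefix of $f_{(a,b,n-1)}$: since $n \ge 5$, Definition \ref{def:biperwords} gives $f_{(a,b,n-1)} = f^{s}_{(a,b,n-2)}f_{(a,b,n-3)}$ with $s \ge 1$, which makes $f_{(a,b,n-2)}$ a prefix of $f_{(a,b,n-1)}$, and therefore also of $f^{2}_{(a,b,n-1)}f_{(a,b,n-2)}$. Consequently $w$ is a prefix of $f^{r-2}_{(a,b,n-1)}\,f^{2}_{(a,b,n-1)}f_{(a,b,n-2)} = f_{(a,b,n)}$. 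A length count then finishes the argument: $|w| = |f_{(a,b,n)}| - 2\,|f_{(a,b,n-1)}| \le |f_{(a,b,n)}| - 2$ because $|f_{(a,b,n-1)}| \ge 1$, so $w$ is a prefix of the length-$(|f_{(a,b,n)}|-2)$ initial segment on which $f_{(a,b,n)}$ and $t_{(a,b,n)}$ agree by Definition \ref{def:tn}; hence $w$ is a prefix of $t_{(a,b,n)}$, as asserted.

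I do not anticipate a real obstacle. The only points requiring care are the exponent bookkeeping — that $r-2 \ge 0$ relies on $a,b \ge 2$, and that the degenerate case $r = 2$ (where $w = f_{(a,b,n-2)}$) is handled uniformly by the same computation — and checking the auxiliary claim ``$f_{(a,b,n-2)}$ is a prefix of $f_{(a,b,n-1)}$'' in the stated range, which is immediate from the recurrence for $n \ge 5$ and needs no separate induction. If one preferred, Proposition \ref{prop:RRprops}(1)--(2) could be invoked to say that $f_{(a,b,n)}$ and $t_{(a,b,n)}$ share the common palindromic prefix $p_{(a,b,n)}$, but Definition \ref{def:tn} alone suffices.
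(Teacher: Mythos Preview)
Your argument is correct and essentially mirrors the paper's: both halves reduce to the recurrence and the fact that $f_{(a,b,n-2)}$ is a prefix of $f_{(a,b,n-1)}$. The only cosmetic difference is that the paper expands $t_{(a,b,n)} = f^{r}_{(a,b,n-1)}t_{(a,b,n-2)}$ directly and reads off the prefix, whereas you first exhibit $w$ as a prefix of $f_{(a,b,n)}$ and then transfer to $t_{(a,b,n)}$ via the length bound $|w| \le |f_{(a,b,n)}|-2$; either route is fine.
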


\begin{proof}
We decompose $f_{(a,b,n)}$ and $t_{(a,b,n)}$ according to the rules allowed by Definition \ref{def:biperwords} and using the notational shortcut of Definition \ref{def:rs}:
\begin{align*}
f_{(a,b,n)} &= f^2_{(a,b,n-1)} f^{r-2}_{(a,b,n-1)} f_{(a,b,n-2)}
\end{align*}
and
\begin{align*}
t_{(a,b,n)} &= f^r _{(a,b,n-1)}t _{(a,b,n-2)}\\
 &= f^{r-2}_{(a,b,n-1)}f_{(a,b,n-1)}t_{(a,b,n-2)} \\
 &= f^{r-2}_{(a,b,n-1)}f_{(a,b,n-2)}f^{s-1}_{(a,b,n-2)}f_{(a,b,n-3)}f_{(a,b,n-1)}t_{(a,b,n-2)}.
\end{align*}
\end{proof}

\begin{definition}\label{def:In}
For $n \ge 5$ let $I_n$ be the word defined as beginning as $f_{(a,b,n)}$ and ending as $t_{(a,b,n)}$ where these two words overlap by exactly $f^{r-2}_{(a,b,n-1)}f_{(a,b,n-2)}$. Visually we can represent $I_{(a,b,n)}$ as follows where the square brackets show $f_{(a,b,n)}$ while the curly brackets show the $t_{(a,b,n)}$.
\begin{align*}
I_{(a,b,n)} &= \left[ f^2_{(a,b,n-1)} \left\{ f^{r-2}_{(a,b,n-1)}f_{(a,b,n-2)} \right] f^{s-1}_{(a,b,n-2)}f_{(a,b,n-3)}f_{(a,b,n-1)}t_{(a,b,n-2)} \right\}.
\end{align*}
\end{definition}

Lemma \ref{lem:fendtstart} is the main situation where the case of $r=1$ becomes problematic since we have $r-2=-1$ copies of $f_{(a,b,n-1)}.$ An alternative version of $I$ will be given in Section \ref{sec:ab1}. 

It will be convenient to have multiple identities for how the words $f$, $t$, and $I$ combine. We begin with a simple one.

\begin{proposition}\label{prop:I}
For all $n \ge 5$ and all $r \ge 2$ we have $I_{(a,b,n)} = f^2_{(a,b,n-1)}t _{(a,b,n)}$.	
\end{proposition}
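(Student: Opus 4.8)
The plan is to unwind the definition of $I_{(a,b,n)}$ and recognize that the tail, after stripping the leading $f^2_{(a,b,n-1)}$, is exactly a (co-)biperiodic Fibonacci word. Concretely, from Definition \ref{def:In} we have
\[
I_{(a,b,n)} = f^2_{(a,b,n-1)}\, f^{r-2}_{(a,b,n-1)}f_{(a,b,n-2)}\, f^{s-1}_{(a,b,n-2)}f_{(a,b,n-3)}f_{(a,b,n-1)}t_{(a,b,n-2)},
\]
so it suffices to show that the word
\[
W := f^{r-2}_{(a,b,n-1)}f_{(a,b,n-2)}\, f^{s-1}_{(a,b,n-2)}f_{(a,b,n-3)}f_{(a,b,n-1)}t_{(a,b,n-2)}
\]
equals $t_{(a,b,n)}$. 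But this is precisely the last line of the computation of $t_{(a,b,n)}$ already carried out in the proof of Lemma \ref{lem:fendtstart}: there it is shown that $t_{(a,b,n)} = f^{r-2}_{(a,b,n-1)}f_{(a,b,n-2)}f^{s-1}_{(a,b,n-2)}f_{(a,b,n-3)}f_{(a,b,n-1)}t_{(a,b,n-2)}$. Hence $W = t_{(a,b,n)}$ and the proposition follows immediately.

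Alternatively, and perhaps more transparently, I would argue directly from the bracket picture in Definition \ref{def:In}. The square brackets mark a copy of $f_{(a,b,n)}$ and the curly brackets a copy of $t_{(a,b,n)}$, and by construction they overlap in exactly $f^{r-2}_{(a,b,n-1)}f_{(a,b,n-2)}$. The portion of $I_{(a,b,n)}$ strictly before the curly bracket is therefore the portion of $f_{(a,b,n)}$ not covered by that overlap; since $f_{(a,b,n)}$ ends in $f^{r-2}_{(a,b,n-1)}f_{(a,b,n-2)}$ by Lemma \ref{lem:fendtstart} and (for $r \ge 2$) $f_{(a,b,n)} = f^2_{(a,b,n-1)}\, f^{r-2}_{(a,b,n-1)}f_{(a,b,n-2)}$, that leading uncovered portion is exactly $f^2_{(a,b,n-1)}$. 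The rest of $I_{(a,b,n)}$, starting at the curly bracket, is by definition a full copy of $t_{(a,b,n)}$. Concatenating the two gives $I_{(a,b,n)} = f^2_{(a,b,n-1)}t_{(a,b,n)}$.

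I do not expect any real obstacle here: the statement is essentially a bookkeeping identity, and the only thing to be careful about is the hypothesis $r \ge 2$, which is needed so that $f^{r-2}_{(a,b,n-1)}$ is a genuine (nonnegative power) word and the decomposition $f_{(a,b,n)} = f^2_{(a,b,n-1)}f^{r-2}_{(a,b,n-1)}f_{(a,b,n-2)}$ makes sense — exactly the same caveat flagged after Lemma \ref{lem:fendtstart}. The condition $n \ge 5$ is inherited from the hypotheses of Lemma \ref{lem:fendtstart} and Definition \ref{def:In}, which are the only inputs used. So the proof is a two-line dereference of Definition \ref{def:In} together with the $t_{(a,b,n)}$ expansion from Lemma \ref{lem:fendtstart}.
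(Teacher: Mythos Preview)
Your proof is correct and follows essentially the same route as the paper: both start from the explicit expansion of $I_{(a,b,n)}$ in Definition~\ref{def:In} and identify the tail after $f^2_{(a,b,n-1)}$ with $t_{(a,b,n)}$. The only difference is cosmetic---the paper re-simplifies that tail step by step (combining $f_{(a,b,n-2)}f^{s-1}_{(a,b,n-2)}f_{(a,b,n-3)}$ back into $f_{(a,b,n-1)}$ and then $f^{r}_{(a,b,n-1)}t_{(a,b,n-2)}$ into $t_{(a,b,n)}$), whereas you simply cite the last displayed line in the proof of Lemma~\ref{lem:fendtstart}, which already records that identity.
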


\begin{proof}
Consider the proof of Lemma \ref{lem:fendtstart} which gives the constituent parts of $I_{(a,b,n)}$. Thus
\begin{align*}
I_{(a,b,n)} &= 	f^2_{(a,b,n-1)} f^{r-2}_{(a,b,n-1)} f_{(a,b,n-2)} f^{s-1}_{(a,b,n-2)}f_{(a,b,n-3)}f_{(a,b,n-1)}t_{(a,b,n-2)}\\
 &= f^2_{(a,b,n-1)} f^{r-2}_{(a,b,n-1)} f^s_{(a,b,n-2)}f_{(a,b,n-3)}f_{(a,b,n-1)}t_{(a,b,n-2)}\\
 &= f^2_{(a,b,n-1)} f^{r-2}_{(a,b,n-1)} f_{(a,b,n-1)}f_{(a,b,n-1)}t_{(a,b,n-2)}\\
 &= f^2_{(a,b,n-1)} f^r_{(a,b,n-1)} t_{(a,b,n-2)}\\
 &= f^2_{(a,b,n-1)} t_{(a,b,n)}.
\end{align*}
\end{proof}

The next proposition is the first where we make use of Corollary \ref{cor:ftswap}. It will be an important tool is recognizing a self-similar structure in $f_{(a,b,n)}$.

\begin{proposition}\label{prop:f^2}
Let $n \ge 6,$ then $f^2_{(a,b,n)} = f^r _{(a,b,n-1)} I _{(a,b,n-1)} t^{r-1} _{(a,b,n-1)}. $
\end{proposition}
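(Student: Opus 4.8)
The plan is to unwind the left-hand side using only the defining recurrence of Definition \ref{def:biperwords}, the swap identity of Corollary \ref{cor:ftswap}, and Proposition \ref{prop:I} read one level down. Writing $r$ for the most recent parameter at level $n$ (Definition \ref{def:rs}), the recurrence gives $f_{(a,b,n)} = f^r_{(a,b,n-1)}f_{(a,b,n-2)}$, hence
\[
f^2_{(a,b,n)} = f^r_{(a,b,n-1)}\,f_{(a,b,n-2)}\,f^r_{(a,b,n-1)}\,f_{(a,b,n-2)}.
\]
The whole argument is then just a matter of rewriting the last two factors $f^r_{(a,b,n-1)}f_{(a,b,n-2)}$ and recognizing an $I$.

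The one ingredient I would isolate as a separate claim is the iterated form of Corollary \ref{cor:ftswap}: for every integer $j \ge 1$,
\[
f^j_{(a,b,n-1)}\,f_{(a,b,n-2)} = f_{(a,b,n-2)}\,t^j_{(a,b,n-1)},
\]
which follows by an immediate induction on $j$ — the base case $j=1$ is Corollary \ref{cor:ftswap} itself (legitimate since $n\ge 6\ge 5$), and the inductive step applies it once more. Applying this with $j=r$ to the last two factors in the display above, and then peeling one copy of $t_{(a,b,n-1)}$ off the front of $t^r_{(a,b,n-1)}$, rewrites $f^2_{(a,b,n)}$ as
\[
f^r_{(a,b,n-1)}\,f^2_{(a,b,n-2)}\,t_{(a,b,n-1)}\,t^{r-1}_{(a,b,n-1)}.
\]
Since $n-1\ge 5$ and the internal exponent of $I_{(a,b,n-1)}$ equals $s\ge 2$ (by the standing assumption $a,b\ge 2$), Proposition \ref{prop:I} applied at level $n-1$ says precisely that $f^2_{(a,b,n-2)}t_{(a,b,n-1)} = I_{(a,b,n-1)}$; substituting this in yields $f^2_{(a,b,n)} = f^r_{(a,b,n-1)}\,I_{(a,b,n-1)}\,t^{r-1}_{(a,b,n-1)}$, which is the assertion.

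I do not anticipate a genuine obstacle here: once set up this is a short computation in the free monoid on $\{0,1\}$. The only thing that needs care is the bookkeeping of parities and parameters — the symbol $r$ belonging to level $n$ plays the role of $s$ at level $n-1$, so one must confirm that the hypotheses of Proposition \ref{prop:I} (index $\ge 5$ and internal exponent $\ge 2$) really hold at level $n-1$, and that the splitting $t^r_{(a,b,n-1)} = t_{(a,b,n-1)}t^{r-1}_{(a,b,n-1)}$ is valid, i.e. $r\ge 1$. All of these are guaranteed by $n\ge 6$ together with $a,b\ge 2$; the degenerate case $r=1$ is exactly what Section \ref{sec:ab1} is reserved for. (One could also avoid the auxiliary induction by invoking Lemma \ref{lem:fendtstart} directly, but iterating Corollary \ref{cor:ftswap} seems the cleanest route.)
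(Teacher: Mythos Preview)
Your argument is correct and is essentially the same as the paper's: expand $f^2_{(a,b,n)}$ via the recurrence, push $f_{(a,b,n-2)}$ to the left through $f^r_{(a,b,n-1)}$ by repeated use of Corollary~\ref{cor:ftswap}, then recognize $f^2_{(a,b,n-2)}t_{(a,b,n-1)}=I_{(a,b,n-1)}$ from Proposition~\ref{prop:I}. Your explicit inductive statement of the iterated swap and your verification that the hypotheses of Proposition~\ref{prop:I} hold at level $n-1$ (namely $n-1\ge 5$ and the relevant parameter $s\ge 2$) are more careful than the paper's write-up, but the route is identical.
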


\begin{proof}
We begin with the left hand side and begin with a decomposition via Definition \ref{def:biperwords}. We then use Corollary \ref{cor:ftswap} to repeatedly swap $f_{(a,b,n-1)}f_{(a,b,n-2)}$ with $f_{(a,b,n-2)}t_{(a,b,n-1)}$. Finally Proposition \ref{prop:I} is used to regather terms. 
\begin{align*}
f^2 _{(a,b,n)} &= f _{(a,b,n)}f _{(a,b,n)}\\
&= f^r_{(a,b,n-1)}f_{(a,b,n-2)}f^r_{(a,b,n-1)}f_{(a,b,n-2)}\\
&= 	f^r_{(a,b,n-1)}f_{(a,b,n-2)} f_{(a,b,n-2)}t^r_{(a,b,n-1)}\\
&= f^r_{(a,b,n-1)}f_{(a,b,n-2)} I_{(a,b,n-1)} t^{r-1}_{(a,b,n-1)}
\end{align*}
\end{proof}


\section{Decompositions of $f_{(a,b,n)}$}\label{sec:decomp}

We now have a collection of biperiodic Fibonacci words $f,\ t,\ I$ which are connected by considering first $f^2$. Now it will be possible to represent $f_{(a,b,n)}$ as a composition of $f_{(a,b,n-l)},\ t_{(a,b,n-l)},$ and $I_{(a,b,n-l)}.$ Where $l$ depends on the parities of $a$ and $b$ and matches the periodicity of the substitution morphism of \cite{RamirezRubiano2015}. That is, the period of the self-similar structure follows from the substitution rules but the contribution here is the \emph{arrangement and number} of the cells in the self-similar structure. 

The following sequence of lemmas and the theorem they culminate in depend on the parity of $a$, $b$, and $n$ lining up in particular ways. Specifically, the eight ways to choose parity of $a$, $b$, and $n$ break down into just three cases. 

\begin{center}
\begin{tabular}{|c|c|c|c|c|c|} \hline
Lemma			& $a$ 	&  $b$	& $n$ 	& $r$	& $s$ \\ \hline \hline
\ref{lem:reven}		& even	& even 	& even	& even	& even\\ \hline
\ref{lem:reven}		& even	& odd 	& even	& even	& odd\\ \hline
\ref{lem:reven}		& odd	& even	& odd	& even	& odd \\ \hline \hline
\ref{lem:roddseven}	& odd	& even	& even	& odd	& even \\ \hline
\ref{lem:roddseven}	& even	& odd	& odd	& odd	& even \\ \hline \hline
\ref{lem:rsodd}		& odd	& odd	& either	& odd	& odd \\ \hline
\end{tabular}
\end{center}

The lower limits on $n$ are generally chosen to accommodate applications of Proposition \ref{prop:f^2}. Similarly we assume that $a,b \ge 2$ here and leave the necessary modifications for when one or the other is $1$ to Section \ref{sec:ab1}.

\begin{lemma}\label{lem:reven}
Suppose $r$ is even and $n \ge 7$, then $f_{(a,b,n)}$ is similar to $f_{(a,b,n-2)}$ according to the formula
\begin{align*}
f_{(a,b,n)} &= \left( f^s_{(a,b,n-2)}I_{(a,b,n-2)}t_{(a,b,n-2)}^{s-1} \right)^{r/2} f_{(a,b,n-2)}.	
\end{align*}
\end{lemma}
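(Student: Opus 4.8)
The plan is to start from the recurrence in Definition~\ref{def:biperwords} and iterate Proposition~\ref{prop:f^2} exactly $r/2$ times. Since $r$ is even, writing $f_{(a,b,n)} = f^r_{(a,b,n-1)}f_{(a,b,n-2)}$ gives an $r$-fold product of $f_{(a,b,n-1)}$, which I would group as $(r/2)$ blocks of $f^2_{(a,b,n-1)}$ followed by the trailing $f_{(a,b,n-2)}$. Each block $f^2_{(a,b,n-1)}$ is then rewritten by Proposition~\ref{prop:f^2} (valid since $n-1 \ge 6$, i.e. $n \ge 7$, which is the stated lower bound) as
\[
f^2_{(a,b,n-1)} = f^{s}_{(a,b,n-2)} I_{(a,b,n-2)} t^{s-1}_{(a,b,n-2)},
\]
where I have used that the ``most recent'' parameter at level $n-1$ is $s$ (the one previous to $r$ at level $n$), consistent with Definition~\ref{def:rs}. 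Substituting this into each of the $r/2$ blocks immediately yields the claimed formula.

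The one genuine subtlety — and the step I expect to need the most care — is the \emph{seam} between consecutive blocks: after applying Proposition~\ref{prop:f^2} to the $i$-th copy of $f^2_{(a,b,n-1)}$, the block ends in $t^{s-1}_{(a,b,n-2)}$, and the next block starts with $f^{s}_{(a,b,n-2)}$, so one must check that the concatenation $\cdots t^{s-1}_{(a,b,n-2)}\, f^{s}_{(a,b,n-2)} I_{(a,b,n-2)} \cdots$ is literally the same word as $\cdots f^2_{(a,b,n-1)} f^2_{(a,b,n-1)} \cdots$. This is not automatic from Proposition~\ref{prop:f^2} applied blockwise; rather it is exactly what Proposition~\ref{prop:f^2} already encodes, because its proof rewrote $f^2_{(a,b,n)} = f^r_{(a,b,n-1)} f_{(a,b,n-2)} \cdot f^r_{(a,b,n-1)} f_{(a,b,n-2)}$ as a single swap across the junction. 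So the clean way to organize the argument is to avoid blockwise substitution entirely: instead, peel off the trailing $f_{(a,b,n-2)}$ once, write $f_{(a,b,n)} = f^2_{(a,b,n-1)} \cdot f^2_{(a,b,n-1)} \cdots f^2_{(a,b,n-1)} \cdot f_{(a,b,n-2)}$ with $r/2$ quadratic factors, and then apply Proposition~\ref{prop:f^2} to each $f^2_{(a,b,n-1)}$; the identity in Proposition~\ref{prop:f^2} is an equality of words, so substituting equal things for equal things in a concatenation is valid, and no seam verification beyond Proposition~\ref{prop:f^2} itself is needed.

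Concretely, the write-up would be a short chain of displayed equalities: first the recurrence; then regrouping the $r$ copies of $f_{(a,b,n-1)}$ into $r/2$ pairs (legitimate since $r$ is even); then one application of Proposition~\ref{prop:f^2} per pair, noting that at index $n-1$ the roles of $r$ and $s$ are swapped so the statement reads $f^2_{(a,b,n-1)} = f^{s}_{(a,b,n-2)} I_{(a,b,n-2)} t^{s-1}_{(a,b,n-2)}$; and finally collecting into the power $(r/2)$. I would also remark that the hypothesis $a,b \ge 2$ guarantees $s \ge 2$, so that $t^{s-1}_{(a,b,n-2)}$ makes sense with a nonnegative exponent and Proposition~\ref{prop:f^2}'s hypothesis $r \ge 2$ (here $s \ge 2$) is met; the degenerate case $s = 1$ is deferred to Section~\ref{sec:ab1}. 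The three rows of the case table matching ``$r$ even'' (namely $(a,b,n)$ with parities (even,even,even), (even,odd,even), (odd,even,odd)) are all handled uniformly by this single computation, since the argument only uses the parity of $r$, not the individual parities of $a$, $b$, $n$.
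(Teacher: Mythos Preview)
Your proof is correct and matches the paper's approach: both apply Proposition~\ref{prop:f^2} at level $n-1$ (where the roles of $r$ and $s$ swap) to rewrite each $f^2_{(a,b,n-1)}$ as $f^{s}_{(a,b,n-2)} I_{(a,b,n-2)} t^{s-1}_{(a,b,n-2)}$, and then use the recurrence $f_{(a,b,n)} = (f^2_{(a,b,n-1)})^{r/2} f_{(a,b,n-2)}$. Your ``seam'' worry is unnecessary---Proposition~\ref{prop:f^2} is a word identity, so blockwise substitution in a concatenation is automatically valid---but you arrive at the right conclusion.
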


\begin{proof}
Consider $f_{(a,b,n)}$ with $r=2$ and $n \ge 7$ using Proposition \ref{prop:f^2}:
\begin{align*}
\left( f^{s}_{(a,b,n-2)} I_{(a,b,n-2)} t^{s-1}_{(a,b,n-2)} \right)^{r/2} f_{(a,b,n-2)} &= \left( f^{2}_{(a,b,n-1)} \right)^{r/2}f_{(a,b,n-2)}.
\end{align*}
Simplifying the powers show that this is equal to $f_{(a,b,n)}.$
\end{proof}

\begin{lemma}\label{lem:rsodd}
When both $r$ and $s$ are odd and $n\ge 8$, then
\begin{align}
f_{(a,b,n)} &= \left[ \left( f^2_{(a,b,n-2)} \right)^{(s+1)/2} t_{(a,b,n-3)} \left( f^2_{(a,b,n-2)} \right)^{(s-1)/2} f_{(a,b,n-3)}\right]^{(r-1)/2}  \label{eq:rsodd} \\
 & \left( f^2_{(a,b,n-2)} \right)^{(s+1)/2} t_{(a,b,n-3)} \nonumber
\end{align}
\end{lemma}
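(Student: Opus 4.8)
The plan is to reduce the whole identity to a single swapping move, namely one instance of Corollary~\ref{cor:ftswap}. Starting from Definition~\ref{def:biperwords} written in the $r,s$-notation of Definition~\ref{def:rs}, we have $f_{(a,b,n)} = f^{r}_{(a,b,n-1)}f_{(a,b,n-2)}$, and since $r$ is odd we may peel off the even part of the exponent, $f^{r}_{(a,b,n-1)} = (f^{2}_{(a,b,n-1)})^{(r-1)/2}f_{(a,b,n-1)}$, so that $f_{(a,b,n)} = (f^{2}_{(a,b,n-1)})^{(r-1)/2}\, f_{(a,b,n-1)}f_{(a,b,n-2)}$. Writing $X := (f^{2}_{(a,b,n-2)})^{(s+1)/2}t_{(a,b,n-3)}(f^{2}_{(a,b,n-2)})^{(s-1)/2}f_{(a,b,n-3)}$ and $Y := (f^{2}_{(a,b,n-2)})^{(s+1)/2}t_{(a,b,n-3)}$ for the two blocks appearing in the claimed formula, it suffices to prove the two word identities $f_{(a,b,n-1)}f_{(a,b,n-2)} = Y$ and $f^{2}_{(a,b,n-1)} = X$; granting these, $f_{(a,b,n)} = (f^{2}_{(a,b,n-1)})^{(r-1)/2}(f_{(a,b,n-1)}f_{(a,b,n-2)}) = X^{(r-1)/2}Y$, which is the asserted decomposition.

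Both identities run off the same expansion: by Definition~\ref{def:biperwords} at level $n-1$, where the relevant exponent is $s$, we have $f_{(a,b,n-1)} = f^{s}_{(a,b,n-2)}f_{(a,b,n-3)}$, and the second identity of Corollary~\ref{cor:ftswap} read at index $n-1$ from right to left gives $f_{(a,b,n-3)}f_{(a,b,n-2)} = f_{(a,b,n-2)}t_{(a,b,n-3)}$, which is legitimate since $n-1 \ge 5$. For the trailing factor we would compute $f_{(a,b,n-1)}f_{(a,b,n-2)} = f^{s}_{(a,b,n-2)}\,f_{(a,b,n-3)}f_{(a,b,n-2)} = f^{s}_{(a,b,n-2)}\, f_{(a,b,n-2)}t_{(a,b,n-3)} = f^{s+1}_{(a,b,n-2)}t_{(a,b,n-3)}$, and since $s$ is odd, $s+1$ is even, so $f^{s+1}_{(a,b,n-2)} = (f^{2}_{(a,b,n-2)})^{(s+1)/2}$ and this equals $Y$. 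For the square, split one factor of $f_{(a,b,n-2)}$ off the front of the second half of $(f^{s}_{(a,b,n-2)}f_{(a,b,n-3)})^{2}$: one gets $f^{2}_{(a,b,n-1)} = f^{s}_{(a,b,n-2)}\,(f_{(a,b,n-3)}f_{(a,b,n-2)})\,f^{s-1}_{(a,b,n-2)}f_{(a,b,n-3)} = f^{s+1}_{(a,b,n-2)}t_{(a,b,n-3)}f^{s-1}_{(a,b,n-2)}f_{(a,b,n-3)}$, and since $s+1$ and $s-1$ are both even, rewriting the two powers of $f_{(a,b,n-2)}$ as powers of $f^{2}_{(a,b,n-2)}$ yields exactly $X$.

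I do not expect a genuine obstacle here; the work is bookkeeping. One must keep straight that the construction exponent alternates with the level ($r$ at levels $n$ and $n-2$, $s$ at levels $n-1$ and $n-3$), note that the standing assumption $a,b\ge 2$ together with $a,b$ odd forces $r,s\ge 3$ so that $(r-1)/2$ and $(s\pm1)/2$ are genuine positive integers and $X$ really occurs, and check the index bound for Corollary~\ref{cor:ftswap}. In fact the argument only ever applies that corollary at index $n-1$, so $n\ge 6$ already suffices; the stated hypothesis $n\ge 8$ is more than enough and is presumably imposed only for uniformity with the adjacent lemmas (some of which call on Proposition~\ref{prop:f^2} at level $n-2$). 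As a cheap sanity check one can compare lengths: writing $\ell_m := |f_{(a,b,m)}| = |t_{(a,b,m)}|$ and using $\ell_{n-1} = s\ell_{n-2}+\ell_{n-3}$, one finds $|X| = 2\ell_{n-1}$, $|Y| = \ell_{n-1}+\ell_{n-2}$, and $(r-1)\ell_{n-1} + |Y| = r\ell_{n-1}+\ell_{n-2} = \ell_n$, matching $|f_{(a,b,n)}|$.
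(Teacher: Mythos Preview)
Your argument is correct and follows the same line as the paper's own proof: both rely on the single swap $f_{(a,b,n-3)}f_{(a,b,n-2)} = f_{(a,b,n-2)}t_{(a,b,n-3)}$ from Corollary~\ref{cor:ftswap} together with $f_{(a,b,n-1)} = f^{s}_{(a,b,n-2)}f_{(a,b,n-3)}$, identifying the two blocks $X=f^{2}_{(a,b,n-1)}$ and $Y=f_{(a,b,n-1)}f_{(a,b,n-2)}$. The only cosmetic difference is direction---the paper simplifies the right-hand side down to $f_{(a,b,n)}$, while you expand $f_{(a,b,n)}$ up to the right-hand side---and your observation that $n\ge 6$ already suffices for the corollary is accurate.
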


Note that subscripts do not indicate a self-similar structure since $n-2 \neq n-3$ but recall that by Proposition \ref{prop:f^2} we know that $f^2_{(a,b,n-2)}$ splits into pieces of $f_{(a,b,n-3)}$ and $t_{(a,b,n-3)}$. Still however unless $a=b=k$ this is also not self-similar since $n$ and $n-3$ have opposite parities so $f_{(a,b,n)}$ consists of $r+1$ pieces from Definition \ref{def:biperwords} while $f_{(a,b,n-3)}$ is made up of $s+1$. So to really have a self-similar structure this lemma needs to be invoked twice, see Theorem \ref{thm:selfsimilarity}. Invoking this lemma twice does give the similarity between $f_{(a,b,n)}$ and $f_{(a,b,n-6)}$ that is indicated by the substitution morphism in \cite{RamirezRubiano2015}[Theorem 20 (2)].

\begin{proof}
Similar to the previous proof consider the right hand side of (\ref{eq:rsodd}) and simplify it to 
\begin{align*}
	\left[ f^{s+1}_{(a,b,n-2)} t_{(a,b,n-3)} f^{s-1}_{(a,b,n-2)}f_{(a,b,n-3)} \right]^{(r-1)/2}f^{s+1}_{(a,b,n-2)}t_{(a,b,n-3)}
\end{align*}
Using Corollary \ref{cor:ftswap} and Definition \ref{def:biperwords} we have the replacement:
\begin{align*}
 f^{s+1}_{(a,b,n-2)}t_{(a,b,n-3)} &= f^{s}_{(a,b,n-2)} f_{(a,b,n-3)} f_{(a,b,n-2)}
\end{align*}
In light of which we have 
\begin{align*}
	(\ref{eq:rsodd}) & = \left[ 
	f^{s}_{(a,b,n-2)} f_{(a,b,n-3)} f^{s}_{(a,b,n-2)} f_{(a,b,n-3)}
	\right]^{(r-1)/2} f^{s}_{(a,b,n-2)}f_{(a,b,n-3)}f_{(a,b,n-2)} \\
	&= \left[ f_{(a,b,n-1)} f_{(a,b,n-1)} \right]^{(r-1)/2} f_{(a,b,n-1)}f_{(a,b,n-2)}\\
	&= f^{r-1}_{(a,b,n-1)} f_{(a,b,n-1)}f_{(a,b,n-2)}\\
	&= f_{(a,b,n)}
\end{align*}
\end{proof}

\begin{lemma}\label{lem:roddseven}
When $r$ is odd and $s$ is even and $n \ge 9$ then 
\begin{align}
	f_{(a,b,n)} &= \left\{ \left[ \left( f_{(a,b,n-3)}^{2} \right)^{(r+1)/2} t_{(a,b,n-4)} \left( f^{2}_{(a,b,n-3)} \right)^{(r-1)/2} f_{(a,b,n-4)} \right]^{s/2} \right. \label{eq:roddseven}\\
	& \left( f^{2}_{(a,b,n-3)} \right)^{(r+1)/2}f_{(a,b,n-4) \nonumber }\\
	&  \left[ \left( f_{(a,b,n-3)}^{2} \right)^{(r+1)/2} t_{(a,b,n-4)} \left( f^{2}_{(a,b,n-3)} \right)^{(r-1)/2} f_{(a,b,n-4)} \right]^{(s-2)/2} \nonumber \\
	& \left.  \left( f^{2}_{(a,b,n-3)} \right)^{(r+1)/2}f_{(a,b,n-4)} \right\}^{(r-1)/2} \nonumber \\
	&  \left[ \left( f_{(a,b,n-3)}^{2} \right)^{(r+1)/2} t_{(a,b,n-4)} \left( f^{2}_{(a,b,n-3)} \right)^{(r-1)/2} f_{(a,b,n-4)} \right]^{s/2} \nonumber \\
	& \left( f^{2}_{(a,b,n-3)} \right)^{(r+1)/2}f_{(a,b,n-4)} \nonumber
\end{align}
\end{lemma}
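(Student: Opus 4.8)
The plan is to argue exactly as in the proofs of Lemmas \ref{lem:reven} and \ref{lem:rsodd}: start from the right-hand side of (\ref{eq:roddseven}) and collapse it in stages, using only Definition \ref{def:biperwords}, Corollary \ref{cor:ftswap}, Proposition \ref{prop:f^2}, and Proposition \ref{prop:I}, until what remains is manifestly $f^{r}_{(a,b,n-1)}f_{(a,b,n-2)}$, which is $f_{(a,b,n)}$. Here $r$ is odd and $s$ is even, and the parity pattern gives $r_{n-2}=r$ and $r_{n-1}=r_{n-3}=s$, so Definition \ref{def:biperwords} reads $f_{(a,b,n-1)}=f^{s}_{(a,b,n-2)}f_{(a,b,n-3)}$ and $f_{(a,b,n-2)}=f^{r}_{(a,b,n-3)}f_{(a,b,n-4)}$; the exponents $(r\pm1)/2$ and $s/2,(s-2)/2$ in the statement are integers precisely because of these parities.

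The first stage treats the innermost block and its connector. Writing
\[
B=\left(f^{2}_{(a,b,n-3)}\right)^{(r+1)/2}t_{(a,b,n-4)}\left(f^{2}_{(a,b,n-3)}\right)^{(r-1)/2}f_{(a,b,n-4)},\qquad C=\left(f^{2}_{(a,b,n-3)}\right)^{(r+1)/2}f_{(a,b,n-4)},
\]
we have $B=f^{r+1}_{(a,b,n-3)}t_{(a,b,n-4)}f^{r-1}_{(a,b,n-3)}f_{(a,b,n-4)}$, and one use of Corollary \ref{cor:ftswap} in the form $f_{(a,b,n-3)}t_{(a,b,n-4)}=f_{(a,b,n-4)}f_{(a,b,n-3)}$ turns this into $\left(f^{r}_{(a,b,n-3)}f_{(a,b,n-4)}\right)^{2}=f^{2}_{(a,b,n-2)}$ by Definition \ref{def:biperwords}; equivalently, $B$ is a rearrangement of Proposition \ref{prop:f^2} applied at level $n-2$. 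Likewise $C=f^{r+1}_{(a,b,n-3)}f_{(a,b,n-4)}=f_{(a,b,n-3)}f_{(a,b,n-2)}$, which also equals $f_{(a,b,n-2)}t_{(a,b,n-3)}$ by Corollary \ref{cor:ftswap}. Substituting $B=f^{2}_{(a,b,n-2)}$ and $C=f_{(a,b,n-3)}f_{(a,b,n-2)}$ throughout (\ref{eq:roddseven}) leaves a word built only from $f_{(a,b,n-2)}$, $f_{(a,b,n-3)}$, and $t_{(a,b,n-3)}$.

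The remaining stages are the maneuver from the proof of Lemma \ref{lem:rsodd}, now one level up. The tools are $f^{s}_{(a,b,n-2)}f_{(a,b,n-3)}=f_{(a,b,n-1)}$ (Definition \ref{def:biperwords} at level $n-1$), the replacement $f^{s+1}_{(a,b,n-2)}t_{(a,b,n-3)}=f^{s}_{(a,b,n-2)}f_{(a,b,n-3)}f_{(a,b,n-2)}$ already established inside the proof of Lemma \ref{lem:rsodd}, and $f^{r}_{(a,b,n-1)}f_{(a,b,n-2)}=f_{(a,b,n)}$. Running the $s/2$- and $(s-2)/2$-fold runs of $B$'s, together with the $C$'s between and after them, through these identities should fuse each repeated unit $B^{s/2}CB^{(s-2)/2}C$ into a single $f^{2}_{(a,b,n-1)}$ and the trailing $B^{s/2}C$ into $f_{(a,b,n-1)}f_{(a,b,n-2)}$, so that the right-hand side of (\ref{eq:roddseven}) reads $\left(f^{2}_{(a,b,n-1)}\right)^{(r-1)/2}f_{(a,b,n-1)}f_{(a,b,n-2)}=f^{r}_{(a,b,n-1)}f_{(a,b,n-2)}=f_{(a,b,n)}$.

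The one real difficulty is the bookkeeping across the three nested layers of exponents. One has to check that the ``one short'' run $B^{(s-2)/2}$ is precisely what gets consumed when a trailing $f_{(a,b,n-2)}$ is reabsorbed to rebuild the factor $f^{s+1}_{(a,b,n-2)}t_{(a,b,n-3)}$ needed for Lemma \ref{lem:rsodd}'s replacement, and that the outer exponent $(r-1)/2$ lines up with the decomposition $f^{r}_{(a,b,n-1)}=\left(f^{2}_{(a,b,n-1)}\right)^{(r-1)/2}f_{(a,b,n-1)}$; an off-by-one is the easiest slip here, so this fusion step should be carried out in full rather than abbreviated. Finally, one should record that the hypothesis $n\ge 9$ is exactly what makes every lowest-level invocation legitimate — in particular Proposition \ref{prop:f^2} at level $n-3$, whose index must be at least $6$.
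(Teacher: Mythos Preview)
Your plan is exactly the paper's: collapse $B=f^{2}_{(a,b,n-2)}$ and $C=f^{r+1}_{(a,b,n-3)}f_{(a,b,n-4)}=f_{(a,b,n-3)}f_{(a,b,n-2)}$ to pass to level $n-2$, then run the Lemma~\ref{lem:rsodd} maneuver one level up to reach $f^{r}_{(a,b,n-1)}f_{(a,b,n-2)}=f_{(a,b,n)}$. That matches the paper's proof line for line.

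The one concrete slip is your assertion that the braced unit $B^{s/2}CB^{(s-2)/2}C$ fuses to $f^{2}_{(a,b,n-1)}$. With the formula exactly as printed it does not: substituting and using $f_{(a,b,n-3)}f_{(a,b,n-2)}=f_{(a,b,n-2)}t_{(a,b,n-3)}$ gives
\[
B^{s/2}CB^{(s-2)/2}C
= f^{s}_{(a,b,n-2)}f_{(a,b,n-3)}\cdot f^{s}_{(a,b,n-2)}t_{(a,b,n-3)}
= f_{(a,b,n-1)}\,t_{(a,b,n-1)},
\]
and then $\bigl(f_{(a,b,n-1)}t_{(a,b,n-1)}\bigr)^{(r-1)/2}f_{(a,b,n-1)}f_{(a,b,n-2)}\neq f_{(a,b,n)}$ already for $r=3$. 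This is not a defect of your strategy but a typo in the displayed statement: the last connector \emph{inside} the curly brace should be $\bigl(f^{2}_{(a,b,n-3)}\bigr)^{(r+1)/2}t_{(a,b,n-4)}$, not $\ldots f_{(a,b,n-4)}$. The paper's own proof silently uses the corrected form---its intermediate expression closes the braced block with $f^{r+1}_{(a,b,n-3)}t_{(a,b,n-4)}$---and with that fix your fusion goes through verbatim, since $f^{r+1}_{(a,b,n-3)}t_{(a,b,n-4)}=f_{(a,b,n-2)}f_{(a,b,n-3)}$ and hence the braced unit becomes $f^{s}_{(a,b,n-2)}f_{(a,b,n-3)}\cdot f^{s}_{(a,b,n-2)}f_{(a,b,n-3)}=f^{2}_{(a,b,n-1)}$. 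So carry out the fusion in full as you warned yourself to do, note the needed correction to the last inner connector, and your argument is then identical to the paper's.
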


Again, Proposition \ref{prop:f^2} lets us obtain a self-similar structure, this time at level $n-4$. This similarity agrees with \cite[Theorem 20]{RamirezRubiano2015} for even $n$ cases and also works with $a$ and $b$ exchanging roles for odd $n$ cases. There are obviously no corresponding $k-$Fiboancci formulas since this case does not appear when $a=b$. 

\begin{proof}
The strategy is the same, first we simplify the exponents in (\ref{eq:roddseven}) and use Corollary \ref{cor:ftswap} to perform the exchange
\begin{align*}
	f^{r+1}_{(a,b,n-3)}t_{(a,b,n-4)} & = f^r_{(a,b,n-3)}f_{(a,b,n-4)} f_{(a,b,n-3)}
\end{align*}

After repeated uses of this fact, the simplification of exponents, and Definition \ref{def:biperwords} we arrive at

\begin{align*}
(\ref{eq:roddseven}) &= \left\{ \left[ f^{2}_{(a,b,n-2)}\right]^{s/2} f_{(a,b,n-3)}^{r+1}f_{(a,b,n-4)} \right. \\
&= \left. \left[ f_{(a,b,n-2)}^{2} \right]^{(s-2)/2} f^{r+1}_{(a,b,n-3)} t_{(a,b,n-4)} \right\}^{(r-1)/2}\\ 
&= \left[ f^{2}_{(a,b,n-2)}\right]^{s/2} f_{(a,b,n-3)}^{r+1}f_{(a,b,n-4)}.
\end{align*}

As $s$ is even this can be further simplified to:
\begin{align*}
& \left\{ f^{s}_{(a,b,n-2)} f^{r+1}_{(a,b,n-3)} f_{(a,b,n-4)} f^{s-2}_{(a,b,n-2)} f^{r+1}_{(a,b,n-3)} t_{(a,b,n-4)}\right\}^{(r-1)/2} \\
 & \hspace{3in}f^{s}_{(a,b,n-2)} f^{r+1}_{(a,b,n-3)} f_{(a,b,n-4)}. 
\end{align*}
Then using Definition \ref{def:biperwords} and Corollary \ref{cor:ftswap} the $n-4$ terms can be combined and ultimately reduce to $f_{(a,b,n)}$. 
\end{proof}

The following theorem follows from Lemmas \ref{lem:reven} through \ref{lem:roddseven} and several uses of Proposition \ref{prop:f^2}. The lemmas contain the technical details but are not writen to make clear what the self-similar structure of a biperiodic Fibonacci word is. The following theorem is a restatement of the lemmas making it clear that $f_{(a,b,n)}$ is an overlapping concatenation of similar words. 

\begin{theorem}\label{thm:selfsimilarity}
The biperiodic Fibonacci words $f_{(a,b,n)}$ are self-similar in the sense of being composed of overlapping copies of $f_{(a,b,n-l)}$ with appropriate values of $l$ depending on the parities of $a$ and $b$. If $a$ and $b$ are both odd the displayed self-similar structure must be applied twice to be truly self-similar. The self-similar structures are displayed in Table \ref{tab:SSS}. The minimal $n$ that these structures hold for are the same as in Lemmas \ref{lem:reven} through \ref{lem:roddseven}.
\end{theorem}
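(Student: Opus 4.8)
The plan is to assemble Theorem \ref{thm:selfsimilarity} directly from the three lemmas, treating it essentially as a bookkeeping and reindexing statement rather than a fresh computation. First I would split into the three parity cases recorded in the table before Lemma \ref{lem:reven}. For the case $r$ even (Lemma \ref{lem:reven}), the decomposition
\[
f_{(a,b,n)} = \left( f^s_{(a,b,n-2)} I_{(a,b,n-2)} t_{(a,b,n-2)}^{s-1} \right)^{r/2} f_{(a,b,n-2)}
\]
already exhibits $f_{(a,b,n)}$ as an overlapping concatenation of copies of $f_{(a,b,n-2)}$ and $t_{(a,b,n-2)}$, once one recalls from Definition \ref{def:In} that $I_{(a,b,n-2)}$ is itself an overlap of an $f_{(a,b,n-2)}$ and a $t_{(a,b,n-2)}$; so here $l=2$ and I would simply record the cell pattern (the block $f^s I t^{s-1}$ repeated $r/2$ times, capped by a trailing $f$) in Table \ref{tab:SSS}. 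The case $r$ odd, $s$ even (Lemma \ref{lem:roddseven}) is handled the same way at level $n-4$: the displayed formula is already in terms of $f_{(a,b,n-3)}^2$, $t_{(a,b,n-4)}$, and $f_{(a,b,n-4)}$, and invoking Proposition \ref{prop:f^2} to rewrite each $f^2_{(a,b,n-3)}$ as $f^r_{(a,b,n-4)} I_{(a,b,n-4)} t^{r-1}_{(a,b,n-4)}$ (noting $n-3$ has $r$-parity odd so the "$r$" there is the current $s$... — I would be careful to track which of $a,b$ plays the role of $r$ at each level) converts everything to level $n-4$ cells, giving $l=4$.

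The genuinely extra step is the both-odd case. Lemma \ref{lem:rsodd} only relates $f_{(a,b,n)}$ to level $n-3$, and since $n$ and $n-3$ have opposite parity the number of top-level pieces ($r+1$ versus $s+1$) does not match, so this is not yet self-similar in the sense of Definition \ref{def:cellstructure}. The plan is therefore to apply Lemma \ref{lem:rsodd} twice: once to pass from $n$ to $n-3$, and again to each $f_{(a,b,n-3)}$ appearing (and, via Proposition \ref{prop:f^2}, to each $f^2_{(a,b,n-2)}$, rewriting it through level $n-3$ and then $n-3$-to-$n-5$... actually cleaner to go $f^2_{(a,b,n-2)} \to$ level $n-4$ pieces by Proposition \ref{prop:f^2}, but $n-4$ has the same parity as $n-2$, so one more descent is needed to reach $n-6$). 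After the dust settles every cell is at level $n-6$, matching the period $l=6$ predicted by \cite{RamirezRubiano2015}[Theorem 20(2)]. I would then state that the resulting arrangement — a nested repetition pattern governed by $(r\pm1)/2$ and $(s\pm1)/2$ — is independent of $n$, which is exactly the content of Definition \ref{def:cellstructure}.

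Throughout, the verification that the pattern is truly $n$-independent reduces to checking that the exponents appearing (all of the form $(r\pm1)/2$, $(s\pm1)/2$, $r/2$, $s/2$) depend only on $a,b$ and not on $n$; this is immediate from Definition \ref{def:rs}, with the one subtlety that which of $a,b$ is "$r$" flips with the parity of the descent level — but since we descend by an even amount ($2$, $4$, or $6$) in each fully-resolved case, the roles are consistent, and in the both-odd subcase $a=b$-like symmetry makes the flip harmless. The minimal-$n$ claims transfer verbatim from the lemmas since we only compose the lemmas and Proposition \ref{prop:f^2} (valid for $n \ge 6$), and the largest lower bound among the inputs dominates. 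The main obstacle I anticipate is purely organizational: keeping the $r$/$s$ relabeling straight across two applications of Lemma \ref{lem:rsodd} and the interleaved uses of Proposition \ref{prop:f^2} in the both-odd case, so that Table \ref{tab:SSS} records the correct nesting of exponents; the algebra itself is routine given the lemmas.
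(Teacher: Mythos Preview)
Your proposal is correct and follows the paper's own approach: the theorem is obtained from Lemmas \ref{lem:reven}--\ref{lem:roddseven} together with Proposition \ref{prop:f^2}, exactly as you outline for the $r$ even and the $r$ odd/$s$ even cases.

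The only place you diverge slightly is the both-odd case. You propose to compose Lemma \ref{lem:rsodd} (and Proposition \ref{prop:f^2}) twice in order to land explicitly at level $n-6$, and you correctly flag the $r/s$ relabelling across two descents as the main bookkeeping hazard. The paper sidesteps this: it applies Lemma \ref{lem:rsodd} once and then Proposition \ref{prop:f^2} once to each $f^{2}_{(a,b,n-2)}$, obtaining the displayed formula entirely at level $n-3$ (second row of Table \ref{tab:SSS}), and simply \emph{states} that this decomposition must be performed twice to yield a genuine cell structure with $l=6$. That spares you the tangled parenthetical (where, incidentally, Proposition \ref{prop:f^2} sends $f^{2}_{(a,b,n-2)}$ to level $n-3$, not $n-4$). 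Your route would of course reach the same conclusion, just with more symbols on the page.
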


\begin{table}[tp] 
\begin{center}
\begin{tabular}{|rp{4in}|}\hline
& \\
$f_{(a,b,n)} =$	& $\left( f^{s}_{(a,b,n-2)} I_{(a,b,n-2)} t^{s-1}_{(a,b,n-2)} \right)^{r/2}f_{(a,b,n-2)}$ \\ 
 & \\ \hline
 & \\
$f_{(a,b,n)} =$	& $ \left[\left( f_{(a,b,n-3)}^{r} I_{(a,b,n-3)} t_{(a,b,n-3)}^{r-1}\right)^{(s+1)/2} t_{(a,b,n-3)} \right. $\\
& $ \left.\left( f_{(a,b,n-3)}^{r} I_{(a,b,n-3)} t_{(a,b,n-3)}^{r-1}\right)^{(s-1)/2} f_{(a,b,n-3)} \right]^{(r-1)/2}  $\\ 
& $ \left( f_{(a,b,n-3)}^{r} I_{(a,b,n-3)} t_{(a,b,n-3)}^{r-1}\right)^{(s+1)/2} t_{(a,b,n-3)} $ \\
& \\ \hline
& \\
$f_{(a,b,n)} =$ & $\left\{ \left[ \left(  f^{s}_{(a,b,n-4)} I_{(a,b,n-4)} t_{(a,b,n-4)}^{s-1} \right)^{(r+1)/2} t_{(a,b,n-4)} \right. \right. $ \\
& $ \left. \left(  f^{s}_{(a,b,n-4)} I_{(a,b,n-4)} t_{(a,b,n-4)}^{s-1} \right)^{(r-1)/2} f_{(a,b,n-4)}\right]^{s/2} $ \\ 
& $ \left(  f^{s}_{(a,b,n-4)} I_{(a,b,n-4)} t_{(a,b,n-4)}^{s-1} \right)^{(r+1)/2} f_{(a,b,n-4)}$ \\
& $ \left[ \left(  f^{s}_{(a,b,n-4)} I_{(a,b,n-4)} t_{(a,b,n-4)}^{s-1} \right)^{(r+1)/2} t_{(a,b,n-4)} \right. $ \\
& $ \left. \left(  f^{s}_{(a,b,n-4)} I_{(a,b,n-4)} t_{(a,b,n-4)}^{s-1} \right)^{(r-1)/2} f_{(a,b,n-4)}\right]^{(s-2)/2} $ \\ 
& $ \left. \left(  f^{s}_{(a,b,n-4)} I_{(a,b,n-4)} t_{(a,b,n-4)}^{s-1} \right)^{(r+1)/2} t_{(a,b,n-4)} \right\}^{(r-1)/2}$ \\
& $ \left[ \left(  f^{s}_{(a,b,n-4)} I_{(a,b,n-4)} t_{(a,b,n-4)}^{s-1} \right)^{(r+1)/2} t_{(a,b,n-4)} \right. $ \\
& $ \left. \left(  f^{s}_{(a,b,n-4)} I_{(a,b,n-4)} t_{(a,b,n-4)}^{s-1} \right)^{(r-1)/2} f_{(a,b,n-4)}\right]^{s/2} $ \\ 
& $  \left(  f^{s}_{(a,b,n-4)} I_{(a,b,n-4)} t_{(a,b,n-4)}^{s-1} \right)^{(r+1)/2} f_{(a,b,n-4)} $ \\
& \\ \hline
\end{tabular}
\end{center}
\caption{The self-similar structures of for $f_{(a,b,n)}$. The first is for when $r$ is even. The second for when $r$ and $s$ are both odd, however in this case the self-similar structure requires that this decomposition be performed twice. The third is for when $r$ is odd but $s$ is even. }
\label{tab:SSS}
\end{table}%

It is evident then that when both $a$ and $b$ are even $l=2$. When one of $a$ or $b$ is odd  $l = 4$. When both are odd $l=6.$ These values of $l$ are the same as those in Theorem 20 of \cite{RamirezRubiano2015} but that the specific cell structure is novel. 

Recall that the original motivation for seeing the self-similar structure of $f_{(a,b,n)}$ was to understand the geometry of fractals which are produced from the words by means of a scaling limit construction. In forthcoming works by the same authors the self-similar decompositions of $f_{(a,b,n)}$ shown in Table \ref{tab:SSS} will be used to identify iterated functions systems that produce the same fractals. The benefit of this will be to ease the analysis of the geometry of such fractals as the theory of iterated functions systems is extensive. In particular by providing an iterated function system it becomes possible to state when the fractal associated to a biperiodic Fibonacci word is homeomorphic to a line and when it is not. 


\section{When $a$ or $b$ is $1$}\label{sec:ab1}
When $a$ or $b$ is equal to $1$ some modifications of the foregoing need to be made. The necessity of the modifications is clearly demonstrated by attempting to write $t_{(1,b,1)}$. Since $f_{(1,b,1)} = 1$ is it difficult to know how to interpret $t_{(1,b,1)}$ by transposing the last \emph{two} symbols in $f_{(1,b,1)}$. Any result in this paper that involved the use of $t_{(a,b,n)}$ will need the minimal value of $n$ assumed increased by $1$ to accommodate $t_{(1,b,1)}$ not being defined. For the sake of brevity we do not provide any of the modified proofs for the $a=1$ or $b=1$ cases but simply indicate what the new hypotheses must be. The proofs use the same methods and techniques and the details can quickly be provided by any motivated reader. 

The other modification that needs to be made is that Definition \ref{def:In} does not make sense when $r=1$.  So instead Proposition \ref{prop:I} can be proven using the following alternative definition of $I_{(a,b,n)}$ when $r=1$. 

\begin{definition}\label{def:Ivariant}
For $n \ge 5$ and $r=1$ let $I_{(a,b,n)}$ consist of the words $f_{(a,b,n)}$, $f_{(a,b,n)}$, and $t_{(a,b,n)}$ in order where each adjacent pair of words overlaps by a copy of $f_{(a,b,n-2)}$. 
\end{definition}

With this alternate definition of $I$ Theorem \ref{thm:selfsimilarity} holds with the same conclusion in the case where $r=1$.

\bibliography{FibFracCollected}{}
\bibliographystyle{plain}

\end{document}